\documentclass[11pt,a4paper]{article}

\usepackage[a4paper,margin=2.7cm]{geometry}
\usepackage[T1]{fontenc}
\usepackage[utf8]{inputenc}
\usepackage{lmodern}
\usepackage{microtype}
\usepackage{amsmath,amssymb,amsthm,mathtools}
\usepackage{xcolor}
\usepackage{hyperref}
\usepackage{comment}
\hypersetup{
  colorlinks=true,
  linkcolor=blue!45!black,
  citecolor=green!35!black,
  urlcolor=blue!55!black
}

\newtheorem{theorem}{Theorem}[section]
\newtheorem{lemma}[theorem]{Lemma}

\newtheorem{corollary}[theorem]{Corollary}
\theoremstyle{definition}
\newtheorem{definition}[theorem]{Definition}
\newtheorem{example}[theorem]{Example}
\theoremstyle{remark}
\newtheorem{remark}[theorem]{Remark}
\newtheorem{notation}[theorem]{Notation}

\newcommand{\F}{\mathbb{F}}
\newcommand{\supp}{\operatorname{supp}}
\newcommand{\sr}{\operatorname{sr}}

\title{On the Slice Rank of Tensors in \texorpdfstring{$P$}{P}-Echelon Form}
\author{Omran Ahmadi$^\dagger$ \and Hassan Norouzi$^*$}

\date{
\vspace*{3mm}
\parbox{\linewidth}{
\centering
\small
 $^\dagger$ School of Mathematics,\\ 
  Institute for Research in Fundamental Sciences (IPM),\\ 
  E-mail: \texttt{oahmadid@ipm.ir}
 \endgraf\medskip
  $^*$ School of Mathematics,\\ 
  Institute for Research in Fundamental Sciences (IPM),\\
  E-mail: \texttt{norouzi@ipm.ir}
  \endgraf\medskip
  \today
  }
}

\begin{document}
\maketitle

\begin{abstract}

For a totally ordered finite set $A$ with the total order $Q$, a poset $P=([d],\le_P)$, and a field $\F$, a tensor \(T:A^d\longrightarrow \F\) is in $P$-echelon form if $r\le_{P}s$ in $P$ implies $a_r\le_Q a_s$ in every tuple \((a_1,\dots,a_d)\) in the support of $T$. We prove that if the Hasse diagram of $P$ has no isolated vertex, then tensors in $P$-echelon form with nonzero diagonal entries have full slice-rank. Our results extend and improve on recent results of Amanov and Yeliussizov.  

\end{abstract}

\section{Introduction}
Let $A_i$, $i=1,\ldots,d$, be some finite sets and $\mathbb{F}$ be a field. A $d$-tensor $T$ is a function from $\prod_i A_i$ to $\mathbb{F}$. A tensor $T$ is called a {\it{slice}} if it can be decomposed as \(T = f g\) where $f$ is a 1-tensor from $A_j$ to $\mathbb{F}$ for some $j$ in $[d]:=\{1,2,\ldots,d\}$ and $g$ is a $(d-1)$-tensor from $\prod_{i\neq j}A_i$ to $\mathbb{F}$. The important notion of the slice-rank of a $d$-tensor is:
\begin{definition}[Slice-rank]
    Let $T\colon \prod A_i \longrightarrow \mathbb{F}$ be a $d$-tensor. The slice-rank denoted $\operatorname{sr}(T)$ of $T$ is the minimum $r$ such that $T$ can be written as the sum of $r$ slices, i.e., $\operatorname{sr}(T)$ is the minimum number $r$ such that
    \[
        T= \sum_{i=1}^r f_i g_i
    \]
    where \(f_ig_i\) is a slice for each \(i\).
\end{definition}

When $A_1=\cdots=A_d$, a $d$-tensor $T$ is diagonal if
\(T(a_1,\ldots,a_d)\neq 0\)
implies 
\(a_1=\cdots=a_d\). Tao's slice-rank lemma for diagonal tensors is: 
\begin{lemma}[T. Tao \cite{TaoWeblog}]\label{diagonalTensor}
    \label{slicerank}
	Let $A$ be a finite set and $\mathbb{F}$ be a field. If $T\colon A^d \rightarrow \mathbb{F}$ is a diagonal $d$-tensor with nonzero diagonal entries, then $\operatorname{sr}(T)=|A|$.
\end{lemma}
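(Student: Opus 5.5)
The upper bound is immediate. Write $T=\sum_{a\in A} \delta_a(x_1)\,g_a(x_2,\dots,x_d)$, where $g_a(x_2,\dots,x_d)=T(a,\dots,a)\prod_{i\ge 2}\delta_a(x_i)$. This expresses $T$ as a sum of $|A|$ slices, so $\sr(T)\le |A|$. The real content is the lower bound $\sr(T)\ge |A|$, which I will prove by induction on $d$. For $d=1$ every nonzero $1$-tensor has slice-rank $1$, but that does not compare with $|A|$, so I take $d=2$ as the base case. There a $2$-tensor is a matrix, slices are exactly rank-one matrices, and so $\sr(T)$ equals the ordinary matrix rank. A diagonal matrix with nonzero diagonal entries has rank $|A|$.

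For the inductive step, suppose $T=\sum_{i=1}^r f_i g_i$ with $r<|A|$. Group the slices by the coordinate $j$ in which $f_i$ lives, and let $I_j$ be the set of indices whose special coordinate is $j$. By symmetry we may assume $I_d$ is nonempty (otherwise relabel the coordinates). Now consider the linear span $W$ of the functions $\{f_i : i\in I_d\}$ on $A$. Its annihilator-type subspace
\[
V=\{h\colon A\to\F \;:\; \textstyle\sum_{a}h(a)f_i(a)=0 \text{ for all } i\in I_d\}
\]
has dimension at least $|A|-|I_d|$. A standard linear algebra fact is that any subspace of dimension $m$ of $\F^A$ contains a vector with support of size at least $m$. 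To see this, row-reduce a basis to echelon form and take a generic combination. Over a small field $\F$ generic combinations may fail, so instead I choose pivot columns: the basis in reduced echelon form gives $m$ pivot positions, and a suitable combination is nonzero on all of them. Even simpler, I can take $h$ to be a combination that is nonzero on all pivots by just summing the basis vectors, since in reduced echelon form the pivot coordinates of the sum equal $1$. Let $S=\supp h$, so $|S|\ge |A|-|I_d|$.

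Contract $T$ against $h$ in the last coordinate: $T'(x_1,\dots,x_{d-1})=\sum_{a}T(x_1,\dots,x_{d-1},a)h(a)$. Every slice indexed by $I_d$ is killed by this contraction. Every other slice $f_ig_i$ with special coordinate $j<d$ contracts to $f_i(x_j)$ times a $(d-2)$-tensor, which is again a slice. Hence $\sr(T')\le r-|I_d|$. On the other hand, $T'$ restricted to $S^{d-1}$ is a diagonal $(d-1)$-tensor with diagonal entries $T(a,\dots,a)h(a)\neq 0$. Restricting to a subgrid does not increase slice-rank, since restricting each slice gives a slice. By induction, $\sr(T')\ge |S|\ge |A|-|I_d|$. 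Therefore $r-|I_d|\ge |A|-|I_d|$, so $r\ge |A|$, which is a contradiction.

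The main obstacle is producing $h\in V$ with large support over an arbitrary, possibly finite, field. The reduced-echelon-form argument with pivot coordinates handles this. I also need the induction to apply to $T'$ restricted to $S$, which requires the statement for arbitrary finite sets, and that is exactly how the lemma is formulated.
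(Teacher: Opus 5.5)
Your proof is correct. The paper gives no proof of this lemma and only cites Tao's blog. Your argument is essentially Tao's original one: induction on $d$ from the matrix case $d=2$ (you are right that the statement tacitly needs $d\ge 2$), and a function $h$ annihilating the $f_i$ with $i\in I_d$ with $|\supp h|\ge |A|-|I_d|$. You then contract in the last coordinate and restrict to $(\supp h)^{d-1}$ to apply the induction hypothesis. Two small simplifications: summing a reduced-echelon basis of $V$ already works over every field, so the remark about generic combinations can be dropped. The relabeling to make $I_d\neq\emptyset$ is also unnecessary, since $h\equiv 1$ works when $I_d=\emptyset$.

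The paper's own tools give the lemma by a different route, with no induction on $d$. Fix any total order on $A$. A diagonal tensor is in $P$-echelon form for every poset $P$, for instance a chain on $[d]$. So the lemma is a special case of Theorem~\ref{thm:main}. Concretely, reverse the order in just one coordinate. Then $\operatorname{Diag}(A^d)$ becomes an antichain in the product order, so $\Gamma=\supp(T)=\operatorname{Diag}(A^d)$. Lemma~\ref{tao-cover-lemma} then gives $\sr(T)\ge |A|$, because each $\delta_a$ lies in some $\Gamma_j$.

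That deduction is immediate, but it rests on the more general Sawin--Tao lemma, which the paper imports without proof. Your route is self-contained and uses only elementary linear algebra.
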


Tao's slice-rank lemma which is rooted in a breakthrough paper by Lev, Croot and Pach~\cite{croot_progression-free_2016} and a subsequent paper of Ellenberg and Gijswijt~\cite{ellenberg2016large} has been at the heart of the slice-rank method in extremal combinatorics. 

There has been attempts to generalize the above lemma to other types of tensors~\cite{AhmadiNorouzi, AhmadiNorouzi2026, AmanovYeliussizov2023}. For example, the generalization for even $d$ to the $d$-tensors in the so-called $P$-echelon form with nonzero diagonal entries by Amanov and Yeliussizov \cite{AmanovYeliussizov2023} where for a totally ordered finite set $A$ with the total order $Q$ and a poset $P=([d],\le_P)$ with connected Hasse diagram, a tensor \(T:A^d\longrightarrow \F\) is in $P$-echelon form if \(T(a_1,\dots,a_d)\ne 0\) implies that if $r\le_{P}s$, then $a_r\le_Q a_s$.  

In this paper, extending the result of Amanov and Yeliussizov to posets whose Hasse diagram has no isolated vertex we prove that:  

\begin{theorem}[Main theorem]\label{thm:main-intro}
Let $d\ge 2$. Furthermore, let $A$ be a totally ordered finite set with the total order $Q$, $\F$ be a field, and let $P=([d],\le_P)$ be a poset whose Hasse diagram has no isolated vertex. Suppose \(T:A^d\longrightarrow \F\)
is in $P$-echelon form with nonzero diagonal entries, i.e., $T(a,\ldots,a)\neq 0$ for every $a\in A$. 
Then
\[
\sr(T)=|A|.
\]
In particular, the conclusion holds for both even and odd $d$.
\end{theorem}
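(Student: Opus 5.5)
The upper bound $\sr(T)\le |A|$ is immediate: fix the first coordinate and write $T=\sum_{a\in A}\delta_a(a_1)\,T(a,a_2,\dots,a_d)$, which is a sum of $|A|$ slices. The content of the theorem is the lower bound, and I would prove it by induction on $|A|$ via a \emph{peeling} argument that removes the $Q$-maximal element of $A$.

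\textbf{Setup.} Since the Hasse diagram of $P$ has no isolated vertex, every coordinate $j\in[d]$ is comparable to at least one other coordinate. Hence each $j$ is either non-maximal in $P$ (it has a cover $s$ with $j<_P s$) or non-minimal in $P$ (it has a cocover $r<_P j$). Let $m=\max_Q A$. The key observation: if $j$ is non-maximal with $j<_P s$ and some tuple in the support has $a_j=m$, then echelon form forces $a_s=m$. Symmetrically, let $m'=\min_Q A$; if $j$ is non-minimal with $r<_P j$ and $a_j=m'$, then $a_r=m'$.

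\textbf{Induction step.} Suppose $T=\sum_{i=1}^{r} f_i g_i$ with $r<|A|$ slices, and let $S_j$ be the set of indices whose slice variable is $j$. I would aim for a Tao-type contradiction by finding a $Q$-extreme element $a^\ast$ (either $m$ or $m'$) and a restriction that kills one slice while keeping the restricted tensor in echelon form on $A\setminus\{a^\ast\}$. A cleaner route is the standard Tao argument. Let $U$ be the space of functions $h$ on $A$ orthogonal to all $f_i$ with $i\in S_1$; then $\dim U\ge |A|-|S_1|$. Contract coordinate $1$ against $h\in U$; this kills the slices in $S_1$ and leaves a $(d-1)$-tensor of slice rank at most $r-|S_1|$. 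The difficulty is that this contracted tensor is not diagonal, so instead I would follow Tao's proof directly: choose $h\in U$ whose support is maximal, with $|\supp h|\ge \dim U$ and with "pivot" positions. Pick $h$ in reduced echelon form relative to the order $Q$, reversed if coordinate $1$ is non-minimal rather than non-maximal. Then restrict the remaining coordinates to the pivot set. The echelon structure should force the restricted contracted tensor to be triangular along the diagonal of the pivot set, with nonzero diagonal entries.

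\textbf{Main obstacle.} The hard step is choosing directions consistently. For each coordinate $j$ the triangularity argument needs the pivot of the echelon basis taken at the extreme (max or min) that the echelon form controls, and this depends on whether $j$ is non-maximal or non-minimal in $P$. So I would split $[d]$ into $J_{\max}$ (non-maximal elements) and $J_{\min}$ (the remaining, hence non-minimal, elements). The absence of isolated vertices guarantees this covers all of $[d]$. This is exactly where connectivity was used by Amanov and Yeliussizov and where the weaker hypothesis must suffice.

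Concretely, I would show by induction on $d$ (contracting coordinates one at a time with bases in reduced echelon form in the appropriate direction) that the fully contracted object is a matrix on a set of size at least $|A|-r>0$. This matrix is upper triangular with respect to $Q$ and has nonzero diagonal, and it must have rank at most $0$, which is a contradiction.

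To verify triangularity, take a surviving tuple and follow the Hasse edges from each coordinate. For a chain $r<_P s$, the pivot conditions give $a_r\ge$ pivot and $a_s\le$ pivot, which combined with $a_r\le_Q a_s$ collapse to equalities. The check that every coordinate has such an edge is precisely the no-isolated-vertex hypothesis, which is why that hypothesis is essential and sharp: an isolated coordinate is unconstrained, and the tensor can then degenerate.
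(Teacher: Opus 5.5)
Your combinatorial core is correct and is exactly the paper's Lemma~\ref{lem:tree-orders}. Give the non-maximal coordinates of $P$ the order $\le_Q$ and the maximal ones $\ge_Q$. Then a support tuple with $x_n\ge_Q a$ for non-maximal $n$ and $x_m\le_Q a$ for maximal $m$ collapses to $\delta_a$. To see this, pair each coordinate with a comparable one of the other type; that pairing is where ``no isolated vertex'' enters.

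The gap is in the linear algebra you feed this into. Tao's induction on $d$ with a single $h\in U$ does not work here, for two reasons.
\begin{itemize}
\item If coordinate $1$ is minimal but not maximal, the contracted tensor has diagonal entries $T'(b,\dots,b)=\sum_{x_1\le_Q b}h(x_1)T(x_1,b,\dots,b)$. These can vanish even when $h(b)\neq 0$. A single vector has only one pivot, so no choice of $h$ controls all these entries at once.
\item The induced poset on the remaining coordinates can have isolated vertices, so the inductive hypothesis is unavailable. For $P$ given by $1<_P2$, $3<_P4$ this happens whichever coordinate you contract.
\end{itemize}
Your stated endpoint is also not coherent. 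Contracting $d-2$ coordinates against single vectors leaves a matrix of rank up to $|S_{d-1}|+|S_d|$, not $0$. Contracting against full bases, on the other hand, does not lower the order at all.

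The missing idea is to change basis in all $d$ coordinates simultaneously, with no induction and no triangularity.
\begin{itemize}
\item For each $j$, let $U_j$ be the space of $h:A\to\F$ with $\sum_x h(x)f_i(x)=0$ for all $i\in S_j$.
\item Take a basis $\{h^{(j)}_p:p\in P_j\}$ of $U_j$ in echelon form for the order assigned to $j$. Then $h^{(j)}_p(p)\neq 0$, $h^{(j)}_p$ vanishes strictly before $p$, and $|P_j|=\dim U_j\ge|A|-|S_j|$.
\item Hence $|A\setminus P_j|\le|S_j|$, so $\bigcap_jP_j$ has at least $|A|-r>0$ elements. This is presumably your ``set of size at least $|A|-r$''. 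Pick $a$ in it.
\item Consider the scalar $\sum_{x\in A^d}T(x)\prod_j h^{(j)}_a(x_j)$. It is $0$ by the slice decomposition, since each slice in $S_j$ is annihilated in coordinate $j$.
\item By the collapse, the same scalar equals $T(\delta_a)\prod_j h^{(j)}_a(a)\neq 0$, a contradiction.
\end{itemize}
This direct argument is in effect the Sawin--Tao lemma specialized to this case. The paper instead quotes Lemma~\ref{tao-cover-lemma}. It notes that the orientation makes every $\delta_a$ a maximal element of $\supp(T)$, so any cover of $\Gamma\supseteq\operatorname{Diag}(A^d)$ has coordinate projections covering $A$.
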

\begin{remark}
    Amanov and Yeliussizov proved that a tensor with nonzero diagonal entries in $P$-echelon form where Hasse diagram of $P$ is connected has full rank when $d$ is even. Our improvement is twofold. First, we extend their theorem to include tensors of odd order. Second, we drop the assumption that the Hasse diagram of $P$ is connected and replace it with the assumption that the Hasse diagram of $P$ has no isolated vertex.
\end{remark}

\section{Proof of theorem~\ref{thm:main-intro}}

Our tool to prove the main result is the following functional reformulation of Proposition 4 which has appeared in~\cite{SawinTaoNotes}.

\begin{lemma}[Sawin-Tao \cite{SawinTaoNotes}]
\label{tao-cover-lemma}
Let $(A_i,\le_i)$ for $i=1,\ldots,d$ be totally ordered finite sets, and $\preceq$ be the partial order on $\prod_{i=1}^dA_i$ induced from the orders $\le_i$ on $A_i$, i.e., $(a_1,\ldots,a_d)\preceq (b_1,\ldots,b_d)$ if $a_i\le_i b_i$ for every $i$. Furthermore, let $\mathbb{F}$ be a field, $T:\prod_{i=1}^dA_i\to \mathbb{F}$ be a $d$-tensor, 
\[
\operatorname{supp}(T)=\left\{x\in \prod_{i=1}^dA_i : T(x)\neq 0\right\}, 
\]
and $\Gamma\subseteq \operatorname{supp}(T)$ be the set of maximal elements of $\operatorname{supp}(T)$. Then
\[
\operatorname{sr}(T)\ge\operatorname{\min_{\substack{\Gamma_1,\cdots,\Gamma_d\\\Gamma=\bigcup_{j=1}^d\Gamma_j}}}\sum_{j=1}^d|\pi_j(\Gamma_j)|
\]
where the minimum is taken over all $\Gamma_1,\ldots,\Gamma_d\subseteq\prod_{i=1}^dA_i $ such that 
\[
\Gamma=\bigcup_{j=1}^d\Gamma_j,
\]
and $\pi_j(\cdot)$ denotes the projection onto the $j$-th coordinate. 
\end{lemma}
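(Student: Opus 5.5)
The plan is to take an optimal slice decomposition $T=\sum_i f_ig_i$ with $\sr(T)=r$ slices. I will let $r_j$ be the number of slices whose one-variable factor $f_i$ lives on $A_j$, so that $\sum_j r_j=r$. Then I will construct a cover $\Gamma=\bigcup_j\Gamma_j$ with $|\pi_j(\Gamma_j)|\le r_j$ for each $j$. This gives $\sum_j|\pi_j(\Gamma_j)|\le r$, and hence the minimum in the statement is at most $\sr(T)$. The tool is a dual/annihilator argument combined with Gaussian elimination adapted to the orders $\le_j$.

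\textbf{Step 1 (annihilators).} For each $j$, let $W_j\subseteq \F^{A_j}$ be the span of the functions $f_i$ belonging to slices in direction $j$, so $\dim W_j\le r_j$. Let $W_j^\perp=\{h:A_j\to\F : \sum_{a}h(a)f(a)=0 \text{ for all } f\in W_j\}$, so $\dim W_j^\perp\ge |A_j|-r_j$.

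\textbf{Step 2 (echelon basis).} By Gaussian elimination, I will choose a basis of $W_j^\perp$ whose elements have pairwise distinct \emph{minimal} support elements with respect to $\le_j$. For $h$ with this minimal support element $s$, this means $h(s)\ne0$ and $h(a)=0$ for $a<_j s$. Let $S_j\subseteq A_j$ be the set of these pivots, so $|S_j|=\dim W_j^\perp\ge |A_j|-r_j$. For $s\in S_j$, write $h_{j,s}$ for the corresponding basis vector.

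\textbf{Step 3 (key contradiction).} I claim that no $\gamma\in\Gamma$ has $\gamma_j\in S_j$ for every $j$. Suppose one did, and contract $T$ against $h_{1,\gamma_1}\otimes\cdots\otimes h_{d,\gamma_d}$, i.e.\ form $\sum_x T(x)\prod_j h_{j,\gamma_j}(x_j)$.
\begin{itemize}
\item Each slice $f_ig_i$ in direction $j$ contributes $0$, because $h_{j,\gamma_j}\perp f_i$. Hence the whole sum is $0$.
\item On the other hand, a nonzero term requires $x_j\ge_j\gamma_j$ for all $j$, i.e.\ $x\succeq\gamma$, together with $x\in\supp(T)$. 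Maximality of $\gamma$ in $\supp(T)$ forces $x=\gamma$. So the sum equals $T(\gamma)\prod_j h_{j,\gamma_j}(\gamma_j)\neq 0$.
\end{itemize}
This is a contradiction, which proves the claim. The one delicate point, and the step I expect to be the only real obstacle, is orienting the echelon form correctly. The pivots must be the \emph{minimal} support elements, so that the surviving terms lie \emph{above} $\gamma$, where maximality applies. Using the largest support elements would fail, since elements below $\gamma$ in the support are not excluded.

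\textbf{Step 4 (the cover).} Define $\Gamma_j=\{\gamma\in\Gamma:\gamma_j\notin S_j\}$. By Step 3, $\Gamma=\bigcup_j\Gamma_j$. Moreover $\pi_j(\Gamma_j)\subseteq A_j\setminus S_j$, so $|\pi_j(\Gamma_j)|\le |A_j|-|S_j|\le r_j$. Summing over $j$ gives $\sum_j|\pi_j(\Gamma_j)|\le\sr(T)$, which is the desired inequality.
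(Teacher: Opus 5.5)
Your proof is correct and complete. The annihilator $W_j^\perp$ has dimension at least $|A_j|-r_j$, and its echelon basis has pivots at the $\le_j$-minimal support elements. Contracting against $h_{1,\gamma_1}\otimes\cdots\otimes h_{d,\gamma_d}$ gives zero from the decomposition and $T(\gamma)\prod_j h_{j,\gamma_j}(\gamma_j)\neq 0$ by maximality. The cover $\Gamma_j=\{\gamma\in\Gamma:\gamma_j\notin S_j\}$ then satisfies $|\pi_j(\Gamma_j)|\le r_j$, and you orient the echelon form the right way (pivots at the minima, so the surviving terms lie above $\gamma$).

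The paper gives no proof of this lemma and only cites Sawin--Tao. As far as I recall, their proof of Proposition 4 is this same annihilator-plus-Gaussian-elimination contraction, phrased with dual vectors on $V_j$ rather than functions on $A_j$.
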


For the sake of simpler exposition, we have the following definition. 
\begin{definition}
Let $d\ge 2$. Furthermore, let $A$ be a totally ordered finite set with the total order $Q$, and let $P=([d],\le_P)$ be a poset. We say $x=(x_1,\dots,x_d)$ is in $P$-echelon form if $r\le_{P}s$ implies $x_r\le_Q x_s$.
\end{definition}

Notice that with the above definition, a $d$-tensor $T:A^d\longrightarrow \mathbb{F}$ is in $P$-echelon form if all the elements of $\operatorname{supp}(T)$ are in $P$-echelon form. 

\begin{notation}
 For $a\in A$, let 
\[
\delta_a=(a,\dots,a)\in A^d, 
\]   
and 
\[
\operatorname{Diag}(A^d):=\{\delta_a:a\in A\}.
\]
\end{notation}

\begin{lemma}\label{lem:tree-orders}
Let $P=([d],\le_P)$ where $d\ge 2$ be a poset with whose Hasse diagram $H$ has no isolated vertex, and let $A$ be a totally ordered finite set with the order $\le_Q$. Then, for each coordinate $j\in[d]$, it is possible to assign either the original order $\le_Q$ or the reverse order $\ge_Q$ on $A$ so that if $x=(x_1,\dots,x_d)$ is in $P$-echelon form and \(\delta_a\preceq x\) in the resulting product order \(\preceq\) on $A^d$, then $x=\delta_a$.
\end{lemma}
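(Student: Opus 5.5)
The plan is to choose a sign $\varepsilon_j\in\{+,-\}$ for each coordinate $j\in[d]$. Here $+$ means we keep $\le_Q$ on coordinate $j$, and $-$ means we use $\ge_Q$. With these signs, $\delta_a\preceq x$ says exactly that $x_j\ge_Q a$ whenever $\varepsilon_j=+$ and $x_j\le_Q a$ whenever $\varepsilon_j=-$. The whole argument is a squeeze: if $r\le_P s$ with $\varepsilon_r=+$ and $\varepsilon_s=-$, then $P$-echelon form gives
\[
a\le_Q x_r\le_Q x_s\le_Q a ,
\]
so $x_r=x_s=a$. Hence it suffices to choose signs so that every $j\in[d]$ lies in such a squeezed configuration.

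\textbf{Step 1 (a star cover of $H$).} View $H$ as an undirected graph on $[d]$. Since it has no isolated vertex, its edge set covers $[d]$, so we may take an inclusion-minimal edge cover $M$. By minimality, every edge of $M$ has an endpoint of degree $1$ in $M$; otherwise we could delete it and still cover all vertices. Thus $M$ has no path with three edges and no cycle. So $M$ is a vertex-disjoint union of stars, each with at least two vertices, and the stars together cover $[d]$. Each edge of $M$ is a cover relation of $P$, so its two endpoints are comparable in $P$. (A single edge is a star with either endpoint as centre.)

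\textbf{Step 2 (signs on one star).} Fix a star with centre $c$ and leaf set $L\neq\emptyset$. Every leaf $u\in L$ satisfies either $u<_P c$ or $u>_P c$.
\begin{itemize}
\item If every leaf is below $c$: give all leaves $+$ and give $c$ the sign $-$. Each pair $u<_P c$ is then squeezed, so $x_u=x_c=a$.
\item If every leaf is above $c$: give all leaves $-$ and give $c$ the sign $+$. The squeeze applies symmetrically.
\item If there are leaves on both sides: give the leaves below $c$ the sign $+$, the leaves above $c$ the sign $-$, and $c$ either sign. For $u<_P c<_P w$, transitivity gives $u\le_P w$, hence
\[
a\le_Q x_u\le_Q x_c\le_Q x_w\le_Q a ,
\]
which forces $x_u=x_c=x_w=a$.
\end{itemize}
In each case all coordinates of the star equal $a$. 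The stars are disjoint, so these choices are consistent, and since they cover $[d]$ we get $x=\delta_a$.

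\textbf{Main obstacle.} The difficulty is a vertex that is the upper end of one Hasse edge and the lower end of another. A naive bipartite labelling of a spanning forest would then demand contradictory signs at that vertex. The star decomposition handles this. Each vertex needs only one witnessing configuration. A centre with leaves on both sides needs no constraint on its own sign, because transitivity of $\le_P$ squeezes it between a $+$ leaf and a $-$ leaf. What remains to verify carefully is the minimal-edge-cover-is-a-star-forest step; this is a standard fact.
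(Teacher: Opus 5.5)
Your proof is correct, but it takes a different and more elaborate route than the paper. The paper uses one global rule: assign $\ge_Q$ to every maximal element of $P$ and $\le_Q$ to every non-maximal element. Each non-maximal $n$ lies below some maximal $m$ because $P$ is finite. Each maximal $m$ lies above some non-maximal $n$, since a Hasse neighbour of a maximal element must lie below it; this is the only place the no-isolated-vertex hypothesis is used. The squeeze you use, $a\le_Q x_n\le_Q x_m\le_Q a$, then pins every coordinate at once. The assignment is canonical and needs no graph theory.

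The difficulty you flag as the main obstacle, a vertex that is the top of one Hasse edge and the bottom of another, does not arise in that scheme. Such a vertex is non-maximal, so it gets $\le_Q$ and is compared with a maximal element above it, via transitivity, rather than with its Hasse neighbours.

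Your minimal-edge-cover/star-forest decomposition buys locality: each coordinate is pinned using only cover relations inside its own star. It also shows that many sign patterns work. For the chain $1<2<3<4$ your construction gives $+,-,+,-$, while the paper's gives $+,+,+,-$. The cost is the auxiliary fact that a minimal edge cover is a star forest, plus a three-case analysis. Neither is needed for the lemma as stated.
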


\begin{proof}
Let $M$ and $N$ denote the set of maximal and non-maximal elements of the coordinate poset $P$, respectively. We assign $\ge_Q$ and $\le_Q$ to each coordinate $m\in M$ and $n\in N$, respectively. Since $H$ has no isolated vertex, the set of non-maximal elements $N$ is non-empty, and furthermore, for every $m\in M$ there exists $n\in N$ such that $n\le_P m$. Now, let $\delta_a\preceq x$ in the resulting product order, $m$ be an arbitrary element of $M$ and $n$ be an element of $N$ such that $n\le_P m$.  Since $x$ is in $P$-echelon form, $\delta_a\preceq x$ and $n\le_P m$, we have $x_n\le_Q x_m$, $a\ge_Q x_m, a\le_Q x_n$. Thus $x_m=x_n=a$. The same argument holds if take $n$ to be an arbitrary element of $N$ and $m$ be an element in $M$ such that $n\le_P m$. Hence the claim holds.   
\end{proof}

We can now prove the main result.

\begin{theorem}\label{thm:main}
Let $P=([d],\le_P)$ where $d\ge 2$ be a poset whose Hasse diagram $H$ has no isolated vertex, and let $A$ be a totally ordered finite set with the order $\le_Q$. If $T$ is a $d$-tensor in $P$-echelon form from $A^d$ to a field $\F$ with nonzero diagonal entries, then 
\[
\sr(T)=|A|.
\]
\end{theorem}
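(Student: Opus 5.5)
The upper bound $\sr(T)\le|A|$ is immediate. For each $a\in A$ take the slice $f_a g_a$, where $f_a=\mathbf 1_{\{x_1=a\}}$ and $g_a(x_2,\dots,x_d)=T(a,x_2,\dots,x_d)$. The sum of these $|A|$ slices is $T$, since $\sum_a f_a=1$. All the work is therefore in the lower bound, which I will get from Lemma~\ref{tao-cover-lemma} applied with the orders supplied by Lemma~\ref{lem:tree-orders}.

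First I equip each coordinate copy $A_j=A$ with $\le_Q$ or $\ge_Q$ exactly as in Lemma~\ref{lem:tree-orders}: reversed on maximal elements of $P$, original on non-maximal ones. Let $\preceq$ be the resulting product order on $A^d$. The key claim is that every diagonal point $\delta_a$ is a maximal element of $\supp(T)$ with respect to $\preceq$. Indeed, $\delta_a\in\supp(T)$ because the diagonal entries are nonzero. If $x\in\supp(T)$ satisfies $\delta_a\preceq x$, then $x$ is in $P$-echelon form because $T$ is, and Lemma~\ref{lem:tree-orders} gives $x=\delta_a$. Hence $\operatorname{Diag}(A^d)\subseteq\Gamma$, where $\Gamma$ is the set of maximal elements of $\supp(T)$.

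Next I bound the minimum in Lemma~\ref{tao-cover-lemma} from below. Let $\Gamma=\bigcup_j\Gamma_j$ be any cover, and put $D_j=\Gamma_j\cap\operatorname{Diag}(A^d)$. The $D_j$ cover the diagonal, and $\pi_j$ is injective on the diagonal because $\pi_j(\delta_a)=a$. Therefore
\[
\sum_{j=1}^d|\pi_j(\Gamma_j)|\ \ge\ \sum_{j=1}^d|\pi_j(D_j)|=\sum_{j=1}^d|D_j|\ \ge\ \Big|\bigcup_{j=1}^d D_j\Big|=|A|.
\]
Lemma~\ref{tao-cover-lemma} then gives $\sr(T)\ge|A|$, and combined with the upper bound this proves the theorem.

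The only delicate point is the maximality claim. Lemma~\ref{tao-cover-lemma} is stated for arbitrary total orders on each factor, so reversing some coordinates is allowed, and Lemma~\ref{lem:tree-orders} is used precisely to rule out any support element strictly above a diagonal point. Once that is settled, the rest is routine counting.
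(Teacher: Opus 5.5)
Your proof is correct and follows the paper's proof essentially step for step. It uses the same trivial slicing for the upper bound, and the same reversed/original coordinate orders from Lemma~\ref{lem:tree-orders} to show $\operatorname{Diag}(A^d)\subseteq\Gamma$. It then gets the same counting bound via Lemma~\ref{tao-cover-lemma}; your intersection with the diagonal and injectivity of $\pi_j$ on it is just a rephrasing of the paper's observation that $A\subseteq\bigcup_j\pi_j(\Gamma_j)$.
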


\begin{proof}
As the diagonal elements of $T$ are nonzero,
\(
\operatorname{Diag}(A^d)
\)
is a subset of $\operatorname{supp}(T)$. Furthermore, if we let $\preceq$ be the order on $A^d$ from Lemma~\ref{lem:tree-orders}, and let $\Gamma$ denote the set of maximal elements of $\supp(T)$, then Lemma~\ref{lem:tree-orders} implies that $\operatorname{Diag}(A^d)\subseteq \Gamma$. 

Now, if \(
\Gamma=\Gamma_1\cup\cdots\cup\Gamma_d,
\)
then each $\delta_a$ belongs to some $\Gamma_j$, and therefore
$a\in\pi_j(\Gamma_j)$. Consequently,
\[
A\subseteq\bigcup_{j=1}^d\pi_j(\Gamma_j)
\]
and hence
\[
\sum_{j=1}^d|\pi_j(\Gamma_j)|\geq |A|.
\]
Thus, from Lemma~\ref{tao-cover-lemma} we have
\[
\operatorname{sr}(T)\geq |A|.
\]
The reverse inequality is immediate from the trivial slicing
\[
T(a_1,\ldots,a_d)
 =
 \sum_{a\in A}
 \mathbf{1}_{\{a_1=a\}}\,
 T(a,a_2,\ldots,a_d).
\]
Hence
\[
\operatorname{sr}(T)=|A|.
\qedhere
\]
\end{proof}

\end{document}